\documentclass[11pt,leqno]{article}
\usepackage[margin=1in]{geometry} 
\usepackage{amssymb,amsfonts,amsmath,bbm,mathrsfs,stmaryrd,mathtools}
\usepackage{xcolor}
\usepackage{url}

\usepackage{graphicx}

\usepackage{accents}

\usepackage{extarrows}

\usepackage[shortlabels]{enumitem}
\usepackage{tensor}

\usepackage{xr}
\usepackage[T1]{fontenc}
\usepackage[utf8]{inputenc}

\usepackage[colorlinks,
linkcolor=black!75!red,
citecolor=blue,
pdftitle={},
pdfproducer={pdfLaTeX},
pdfpagemode=None,
bookmarksopen=true,
bookmarksnumbered=true,
backref=page]{hyperref}

\usepackage{tikz}
\usetikzlibrary{arrows,calc,decorations.pathreplacing,decorations.markings,decorations.shapes,intersections,shapes.geometric,through,fit,shapes.symbols,positioning,decorations.pathmorphing}

\makeatletter
\newlength\zig@L
\newlength\zig@La
\newlength\zig@Lb

\newcommand{\xzigrightarrow}[2][]{%
  \mathrel{%
    \settowidth{\zig@La}{$\scriptstyle #2$}%
    \settowidth{\zig@Lb}{$\scriptstyle #1$}%
    \zig@L=\zig@La\relax
    \ifdim\zig@Lb>\zig@L \zig@L=\zig@Lb\fi
    \advance\zig@L by 2.2em\relax
    \tikz[baseline=-0.65ex]{%
      \draw[->,
            line cap=round,
            decorate,
            decoration={zigzag,segment length=4pt,amplitude=1.1pt}]%
        (0,0) -- (\zig@L,0)
        node[midway,above=2pt] {$\scriptstyle #2$}%
        \if\relax\detokenize{#1}\relax\else
          node[midway,below=2pt] {$\scriptstyle #1$}%
        \fi
      ;
    }%
  }%
}
\makeatother

\makeatletter
\newcommand{\squigjoin}{1mu} 

\def\sqleft@{\sim}                    
\def\sqmid@{\sim\mkern-\squigjoin}    

\def\rightsquigarrowfill@{%
  \arrowfill@{\sqleft@}{\sqmid@}{\mkern-4mu\succ}%
}

\newcommand{\xrightsquigarrow}[2][]{%
  \ext@arrow 0359\rightsquigarrowfill@{#1}{#2}%
}
\makeatother

\makeatletter
\newcommand*\circled[1]{\tikz[baseline=(char.base)]{
    \node[shape=circle, draw, inner sep=0pt, 
    minimum height={\f@size},] (char) {\vphantom{WAH1g}#1};}}
\makeatother

\makeatletter
\DeclareRobustCommand\widecheck[1]{{\mathpalette\@widecheck{#1}}}
\def\@widecheck#1#2{%
    \setbox\z@\hbox{\m@th$#1#2$}%
    \setbox\tw@\hbox{\m@th$#1%
       \widehat{%
          \vrule\@width\z@\@height\ht\z@
          \vrule\@height\z@\@width\wd\z@}$}%
    \dp\tw@-\ht\z@
    \@tempdima\ht\z@ \advance\@tempdima2\ht\tw@ \divide\@tempdima\thr@@
    \setbox\tw@\hbox{%
       \raise\@tempdima\hbox{\scalebox{1}[-1]{\lower\@tempdima\box
\tw@}}}%
    {\ooalign{\box\tw@ \cr \box\z@}}}
\makeatother

\usepackage{braket}

\usepackage[amsmath,thmmarks,hyperref]{ntheorem}
\usepackage{cleveref}

\newcommand\nthalias[1]{\AddToHook{env/#1/begin}{\crefalias{lemma}{#1}}}

\nthalias{definition}
\nthalias{example}
\nthalias{examples}
\nthalias{remark}
\nthalias{remarks}
\nthalias{convention}
\nthalias{notation}
\nthalias{construction}
\nthalias{sketch}
\nthalias{theoremN}
\nthalias{propositionN}
\nthalias{corollaryN}
\nthalias{lemma}
\nthalias{proposition}
\nthalias{corollary}
\nthalias{theorem}
\nthalias{conjecture}
\nthalias{question}
\nthalias{assumption}

\creflabelformat{enumi}{#2#1#3}

\crefname{section}{Section}{Sections}
\crefformat{section}{#2Section~#1#3} 
\Crefformat{section}{#2Section~#1#3} 

\crefname{subsection}{\S}{\S\S}
\AtBeginDocument{%
  \crefformat{subsection}{#2\S#1#3}%
  \Crefformat{subsection}{#2\S#1#3}%
}

\crefname{subsubsection}{\S}{\S\S}
\AtBeginDocument{%
  \crefformat{subsubsection}{#2\S#1#3}%
  \Crefformat{subsubsection}{#2\S#1#3}%
}

\theoremstyle{plain}

\newtheorem{lemma}{Lemma}[section]
\newtheorem{proposition}[lemma]{Proposition}
\newtheorem{corollary}[lemma]{Corollary}
\newtheorem{theorem}[lemma]{Theorem}

\theoremstyle{plain}
\theoremnumbering{Alph}

\theoremstyle{plain}
\theorembodyfont{\upshape}
\theoremsymbol{\ensuremath{\blacklozenge}}

\newtheorem{definition}[lemma]{Definition}

\newtheorem{remark}[lemma]{Remark}
\newtheorem{remarks}[lemma]{Remarks}

\crefname{definition}{definition}{definitions}
\crefformat{definition}{#2definition~#1#3} 
\Crefformat{definition}{#2Definition~#1#3} 

\crefname{ex}{example}{examples}
\crefformat{example}{#2example~#1#3} 
\Crefformat{example}{#2Example~#1#3} 

\crefname{exs}{example}{examples}
\crefformat{examples}{#2example~#1#3} 
\Crefformat{examples}{#2Example~#1#3} 

\crefname{remark}{remark}{remarks}
\crefformat{remark}{#2remark~#1#3} 
\Crefformat{remark}{#2Remark~#1#3} 

\crefname{remarks}{remark}{remarks}
\crefformat{remarks}{#2remark~#1#3} 
\Crefformat{remarks}{#2Remark~#1#3} 

\crefname{convention}{convention}{conventions}
\crefformat{convention}{#2convention~#1#3} 
\Crefformat{convention}{#2Convention~#1#3} 

\crefname{notation}{notation}{notations}
\crefformat{notation}{#2notation~#1#3} 
\Crefformat{notation}{#2Notation~#1#3} 

\crefname{table}{table}{tables}
\crefformat{table}{#2table~#1#3} 
\Crefformat{table}{#2Table~#1#3}

\crefname{lemma}{lemma}{lemmas}
\crefformat{lemma}{#2lemma~#1#3} 
\Crefformat{lemma}{#2Lemma~#1#3} 

\crefname{proposition}{proposition}{propositions}
\crefformat{proposition}{#2proposition~#1#3} 
\Crefformat{proposition}{#2Proposition~#1#3} 

\crefname{propositionN}{proposition}{propositions}
\crefformat{propositionN}{#2proposition~#1#3} 
\Crefformat{propositionN}{#2Proposition~#1#3} 

\crefname{corollary}{corollary}{corollaries}
\crefformat{corollary}{#2corollary~#1#3} 
\Crefformat{corollary}{#2Corollary~#1#3} 

\crefname{corollaryN}{corollary}{corollaries}
\crefformat{corollaryN}{#2corollary~#1#3} 
\Crefformat{corollaryN}{#2Corollary~#1#3} 

\crefname{theorem}{theorem}{theorems}
\crefformat{theorem}{#2theorem~#1#3} 
\Crefformat{theorem}{#2Theorem~#1#3} 

\crefname{theoremN}{theorem}{theorems}
\crefformat{theoremN}{#2theorem~#1#3} 
\Crefformat{theoremN}{#2Theorem~#1#3} 

\crefname{enumi}{}{}
\crefformat{enumi}{#2#1#3}
\Crefformat{enumi}{#2#1#3}

\crefname{assumption}{assumption}{Assumptions}
\crefformat{assumption}{#2assumption~#1#3} 
\Crefformat{assumption}{#2Assumption~#1#3} 

\crefname{construction}{construction}{Constructions}
\crefformat{construction}{#2construction~#1#3} 
\Crefformat{construction}{#2Construction~#1#3} 

\crefname{sketch}{sketch}{Sketches}
\crefformat{sketch}{#2sketch~#1#3} 
\Crefformat{sketch}{#2Sketch~#1#3} 

\crefname{question}{question}{Questions}
\crefformat{question}{#2question~#1#3} 
\Crefformat{question}{#2Question~#1#3} 

\crefname{equation}{}{}
\crefformat{equation}{(#2#1#3)} 
\Crefformat{equation}{(#2#1#3)}

\numberwithin{equation}{section}

\theoremstyle{nonumberplain}
\theoremsymbol{\ensuremath{\blacksquare}}

\newtheorem{proof}{Proof}
\newcommand\pf[1]{\newtheorem{#1}{Proof of \Cref{#1}}}

\newcommand\bZ{{\mathbb Z}}

\newcommand\cC{{\mathcal C}}

\newcommand\cM{{\mathcal M}}

\newcommand\cP{{\mathcal P}}

\newcommand\fg{{\mathfrak g}}

\newcommand\fn{{\mathfrak n}}

\newcommand\fr{{\mathfrak r}}

\newcommand\fz{{\mathfrak z}}

\newcommand\fsl{\mathfrak{sl}}

\DeclareMathOperator{\id}{id}

\DeclareMathOperator{\End}{\mathrm{End}}
\DeclareMathOperator{\Hom}{\mathrm{Hom}}

\newcommand{\cat}[1]{\textsc{#1}}

\newcommand{\qedhere}{\mbox{}\hfill\ensuremath{\blacksquare}}

\newcommand{\comment}[1]{}

\title{Associative/Lie radical (co)invariance under tracial (co)actions}
\author{Alexandru Chirvasitu}

\begin{document}

\date{}

\newcommand{\Addresses}{{
  \bigskip
  \footnotesize

  \textsc{Department of Mathematics, University at Buffalo}
  \par\nopagebreak
  \textsc{Buffalo, NY 14260-2900, USA}  
  \par\nopagebreak
  \textit{E-mail address}: \texttt{achirvas@buffalo.edu}

}}

\maketitle

\begin{abstract}
  We prove a number of Jacobson/solvable/nilpotent-radical (co)invariance results for finite-dimensional associative or Lie (co)module-algebras over Hopf algebras $H$, provided the characteristic, if positive, is large relative to the dimension of the algebra and the (co)actions satisfy trace-preservation conditions automatic when $H$ is involutory. This generalizes a number of such radical-invariance criteria in the literature, due to A. Gordienko, V. Linchenko, Pagon-Repov{\v{s}}-Zaicev and others, providing a uniform enriched-categorical framework for those results.
\end{abstract}

\noindent \emph{Key words:
  Hopf algebra;
  Jacobson radical;
  comodule-algebra;
  involutory;
  left dual;  
  monoidal category;
  nilradical;
  tracial
}

\vspace{.5cm}

\noindent{MSC 2020: 16T05; 16T15; 18D15; 17A65; 16N40; 18M05; 17B10; 15A15
  
  
}


\section*{Introduction}

Recall \cite[Theorem 2.1]{MR1995055}, to the effect that \emph{involutory} (i.e. square-antipode-1) Hopf algebras acting (unitally and multiplicatively) on finite-dimensional unital associative $\Bbbk$-algebras $A$ of \emph{characteristic exponent} \cite[\S I.3.1]{brl}
\begin{equation*}
  \exp(\Bbbk)
  :=
  \begin{cases}
    p&\mathrm{char}(\Bbbk)=p>0\\
    1&\mathrm{char}(\Bbbk)=0
  \end{cases}
\end{equation*}
coprime to $\dim A$ leave Jacobson radicals invariant. That claim notwithstanding, in positive characteristic the proof appears \cite{gord_msg_involut} to use the stronger requirement that $\dim A<\mathrm{char}(\Bbbk)$. In that form, the crux of the proof seems to consist in the $H$-action's being automatically \emph{tracial} when $H$ is involutory (\Cref{cor:h.inv.assoc}):

\begin{definition}\label{def:trcl.coact}
  Let $\tensor*[_H]{\cM}{_f}$, $\cM_f^H$ be the categories of finite-dimensional left $H$-modules and respectively right $H$-comodules for a Hopf $\Bbbk$-algebra over a field $\Bbbk$.

  \begin{enumerate}[(1),wide]
  \item An object $V$ in either category is \emph{tracial} if the induced (co)action on $\End(V)\cong V\otimes V^*$ preserves the trace functional $V\otimes V^*\xrightarrow{\mathrm{Tr}}\Bbbk$.

  \item An object in either of the categories $\tensor*[_H]{\cM}{}$ or $\cM^H$ of possibly infinite-dimensional (co)modules is \emph{(co-)hereditarily tracial} if all of its finite-dimensional subobjects (respectively quotients) are tracial.
  \end{enumerate}  
\end{definition}  
The motivating result thus generalizes:

\begin{theorem}\label{th:leaves.jac.inv}
  Let $A$ be a finite-dimensional unital, associative, tracial $H$-(co)module $\Bbbk$-algebra in the sense of \Cref{def:trcl.coact} with $\dim A<\mathrm{char}(\Bbbk)$ if the latter is positive.

  The Jacobson radical of $A$ is then $H$-(co)invariant. 
\end{theorem}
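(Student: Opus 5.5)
We plan to characterize the Jacobson radical trace-theoretically and then check that this characterization is preserved by the (co)action. Write $J=J(A)$. Since $A$ is finite-dimensional, $J$ is the largest nilpotent ideal. The key description we aim for is
\begin{equation*}
  J=\{a\in A\ :\ \mathrm{Tr}(L_{ab})=0\ \text{for all } b\in A\},
\end{equation*}
where $L_x\in\End(A)$ denotes left multiplication by $x$; in other words, $J$ is the radical of the trace form $(a,b)\mapsto \mathrm{Tr}(L_aL_b)$ on $A$. The inclusion $\subseteq$ is clear, because $ab\in J$ is nilpotent, hence so is $L_{ab}$. For $\supseteq$, let $I$ be the right-hand side. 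It is a two-sided ideal: closure under right multiplication is immediate, and for left multiplication we use $\mathrm{Tr}(L_{cab})=\mathrm{Tr}(L_cL_{ab})=\mathrm{Tr}(L_{ab}L_c)=\mathrm{Tr}(L_{abc})$. Every $a\in I$ satisfies $\mathrm{Tr}(L_{a^k})=0$ for all $k\ge 1$, so the eigenvalues of $L_a$ (over $\overline{\Bbbk}$) have vanishing power sums $p_1,\dots,p_n$, where $n=\dim A$. Newton's identities express the elementary symmetric functions $e_1,\dots,e_n$ in terms of $p_1,\dots,p_n$ using division only by $1,\dots,n$. This is the only point where we need $\dim A<\mathrm{char}(\Bbbk)$. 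It follows that all $e_k$ vanish, so $L_a$ is nilpotent, and since $A$ is unital, $a$ is nilpotent. Thus $I$ is a nil ideal of a finite-dimensional algebra, and hence $I\subseteq J$.

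Next we show that this description is (co)invariant; we do the module case, and the comodule case is dual. The multiplication map $L:A\to\End(A)\cong A\otimes A^*$, $a\mapsto L_a$, is a morphism of $H$-modules: this is just the module-algebra axiom $h\cdot(ab)=(h_{(1)}a)(h_{(2)}b)$ rewritten as $L_{ha}=h_{(1)}\circ L_a\circ S(h_{(2)})$, where the right-hand side is the standard $H$-action on $\End(A)$. Traciality of $A$ says exactly that $\mathrm{Tr}:\End(A)\to\Bbbk$ is $H$-linear, i.e. $\mathrm{Tr}(h\cdot T)=\varepsilon(h)\mathrm{Tr}(T)$. Composing, the functional $\tau(a)=\mathrm{Tr}(L_a)$ satisfies $\tau(h\cdot a)=\varepsilon(h)\tau(a)$. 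Combining this with multiplicativity, the bilinear form $\beta(a,b)=\tau(ab)$ is $H$-invariant: $\beta(h_{(1)}a,h_{(2)}b)=\varepsilon(h)\beta(a,b)$.

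It remains to deduce that the radical $J=\{a:\beta(a,A)=0\}$ of an invariant form is a submodule. Given $a\in J$, $b\in A$ and $h\in H$, we write
\begin{equation*}
  \beta(ha,b)=\beta\bigl(h_{(1)}a,\ \varepsilon(h_{(2)})b\bigr)=\beta\bigl(h_{(1)}a,\ h_{(2)}S(h_{(3)})b\bigr)=\varepsilon(h_{(1)})\,\beta\bigl(a,S(h_{(2)})b\bigr)=0.
\end{equation*}
Equivalently, the map $A\to A^*$, $a\mapsto\beta(a,-)$, is $H$-linear, and so its kernel $J$ is an $H$-submodule. In the comodule case the same argument applies with the coaction: $L$ becomes a comodule map, $\tau$ becomes a morphism into the trivial comodule, and the kernel of the colinear map $A\to A^*$ (with $A^*$ carrying its left-dual comodule structure) is a subcomodule.

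We expect the main obstacle to be the bookkeeping in the second step. We need to confirm that the $H$-structure on $\End(A)\cong A\otimes A^*$ appearing in \Cref{def:trcl.coact} is precisely the one making $a\mapsto L_a$ equivariant. Here we must be careful about the choice of left versus right dual, and about whether $S$ or $S^{-1}$ appears, particularly for comodules. The Newton-identity step is standard once the characteristic bound is available.
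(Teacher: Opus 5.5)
Your proposal is correct and follows the same route as the paper. You identify $J(A)$ (using $\dim A<\mathrm{char}(\Bbbk)$) with the kernel of the symmetric form $\langle a\mid b\rangle=\mathrm{Tr}(L_{ab})$, and traciality together with the equivariance $L_{ha}=h_{(1)}\circ L_a\circ S(h_{(2)})$ makes that form $H$-(co)invariant, so its kernel is a sub(co)module; you just spell out the details the paper leaves implicit (Newton's identities, the kernel-of-an-invariant-form computation), with $A^*$ taken as the left dual exactly as in the paper's convention.
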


There is a Lie-algebraic version of the discussion, implying \emph{nilradical} \cite[\S I.7, post Proposition 6]{jac_lie_1979} invariance under appropriate conditions comparable to those assumed in \Cref{th:leaves.jac.inv}. 

\begin{theorem}\label{th:leaves.lie.nil.rad.inv}
  Let $L$ be a finite-dimensional, $H$-(co)module $\Bbbk$-Lie algebra, with co-hereditarily tracial tensor algebra $TL$ in the sense of \Cref{def:trcl.coact}, and with $\dim \mathrm{alg}\Braket{\mathrm{ad}_L(L)}<\mathrm{char}(\Bbbk)$ if the latter is positive.

  The nilradical $\fn(L)$ is then $H$-(co)invariant.
\end{theorem}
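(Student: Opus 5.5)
The plan is to reduce to \Cref{th:leaves.jac.inv}, applied to the associative envelope $A:=\mathrm{alg}\Braket{\mathrm{ad}_L(L)}\subseteq\End(L)$. Recall the standard characterization of the nilradical: $\fn(L)$ is the largest nilpotent ideal, and it can be described through the adjoint representation. If $J:=J(A)$ is the Jacobson radical of $A$, then the set
\begin{equation*}
  \{x\in L\ :\ \mathrm{ad}_L(x)\in J\}
\end{equation*}
is an ideal of $L$ acting nilpotently on $L$. It is therefore nilpotent by Engel's theorem, so it is contained in $\fn(L)$. Conversely, $\mathrm{ad}_L(\fn(L))$ generates a nilpotent ideal of $A$, since the ideal of $A$ generated by $\mathrm{ad}(\fn)$ lies in the associative span of products containing at least one factor from $\mathrm{ad}(\fn)$, and such products of sufficient length vanish on a composition series of $L$ refined through the $\fn$-stable flag. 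Thus $\fn(L)=\mathrm{ad}_L^{-1}(J(A))$, and it suffices to show that $A$ is an $H$-(co)module algebra, that $\mathrm{ad}_L$ is (co)linear, and that $J(A)$ is (co)invariant.

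Step 1: equip $A$ with an $H$-(co)module-algebra structure. The map $TL\to\End(L)$ induced by $\mathrm{ad}_L$ is a morphism of algebras. Since the bracket is $H$-(co)linear, this map is compatible with the (co)action on $TL$ and the conjugation (co)action on $\End(L)\cong L\otimes L^*$. Its image is $A$, so $A$ is a quotient (co)module-algebra of $TL$. There is a subtlety in the module case: one must check that the image is a sub-(co)module of $\End(L)$. I would handle this via the Hopf-algebraic identity $h\triangleright(\mathrm{ad}_x\circ f)=\mathrm{ad}_{h_1x}\circ(h_2\triangleright f)$, and its comodule analogue.

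Step 2: traciality. By hypothesis $TL$ is co-hereditarily tracial, so its finite-dimensional quotient $A$ is tracial in the sense of \Cref{def:trcl.coact}. (If $A$ is unital as a subalgebra containing $\id_L$, i.e. the image of $1\in TL$, this fits the unital hypothesis.) Combined with $\dim A<\mathrm{char}(\Bbbk)$, \Cref{th:leaves.jac.inv} yields that $J(A)$ is $H$-(co)invariant.

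Step 3: conclude. By (co)linearity of $\mathrm{ad}_L\colon L\to A$, the preimage of an $H$-(co)stable subspace is $H$-(co)stable. In the comodule case this uses that the preimage of a subcomodule under a comodule map is a subcomodule, which is fine over a field. Hence $\fn(L)$ is $H$-(co)invariant.

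The main obstacle I expect is the identification $\fn(L)=\mathrm{ad}_L^{-1}(J(A))$ in positive characteristic. Engel's theorem is characteristic-free, so the containment "$\subseteq\fn$" is safe. The reverse containment needs care: $\fn(L)$ need not act nilpotently on $L$ in general, but it does, since it is a nilpotent ideal and $[\fn,[\fn,\dots,L]]$ falls into $\fn$ after one step. So $\mathrm{ad}(\fn)$ consists of nilpotent operators stabilizing a flag, and the two-sided ideal of $A$ it generates is nilpotent, because the flag can be taken $A$-stable by refining a chain of ideals of $L$. I would try first to build that $A$-stable filtration of $L$ from the lower central series of $\fn$ together with $L\supseteq\fn$.
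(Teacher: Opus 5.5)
Your proposal is correct and follows the paper's proof. You realize $A=\mathrm{alg}\Braket{\mathrm{ad}_L(L)}$ as an $H$-equivariant quotient of $TL$, so it is tracial; you apply \Cref{th:leaves.jac.inv} to get $J(A)$ (co)invariant; and you pull $J(A)$ back along the (co)linear map $\mathrm{ad}_L$. The only difference is that you prove $\fn(L)=\mathrm{ad}_L^{-1}(J(A))$ yourself where the paper cites Jacobson, and your closing idea already settles the harder inclusion with no refinement needed: the terms of the lower central series of $\fn(L)$ are ideals of $L$, so $L\supseteq\fn(L)\supseteq[\fn(L),\fn(L)]\supseteq\cdots\supseteq 0$ is an $A$-stable chain that each $\mathrm{ad}_x$ with $x\in\fn(L)$ shifts down one step, which makes the ideal of $A$ generated by $\mathrm{ad}_L(\fn(L))$ nilpotent.
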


A variant addresses \emph{solvable-}radical preservation in characteristic 0.

\begin{theorem}\label{th:char0.solv.rad.trcl}
  For a finite-dimensional, co-hereditarily tracial $H$-(co)module characteristic-0 $\Bbbk$-Lie algebra $L$ the radical $\fr(L)$ is $H$-(co)invariant.   
\end{theorem}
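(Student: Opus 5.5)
In characteristic $0$ the radical $\fr(L)$ is cut out by a trace form. By Cartan's criterion, $\fr(L)$ is the orthogonal complement of $[L,L]$ with respect to the Killing form $\kappa(x,y)=\mathrm{Tr}(\mathrm{ad}_x\mathrm{ad}_y)$:
\begin{equation*}
  \fr(L)=[L,L]^{\perp_\kappa}.
\end{equation*}
This holds in characteristic $0$ (see \cite{jac_lie_1979}). The plan is to show that $[L,L]$ is a subcomodule (resp. submodule) and that $\kappa$ is a morphism of (co)modules $L\otimes L\to\Bbbk$. Given both facts, the orthogonal complement of an $H$-invariant subobject under an $H$-equivariant bilinear form is again $H$-invariant, and we are done.

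The first fact is immediate. The bracket $L\otimes L\to L$ is a (co)module morphism, since that is what an $H$-(co)module Lie algebra means, and $[L,L]$ is its image.

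For the second fact, factor $\kappa$ as a composite of (co)module morphisms. The map $\mathrm{ad}:L\to\End(L)\cong L\otimes L^*$ is $H$-equivariant, because it is the currying of the bracket in the rigid monoidal category of finite-dimensional (co)modules. Composition in $\End(L)$ is also equivariant, since it is built from evaluation. So $L\otimes L\to\End(L)$, $x\otimes y\mapsto\mathrm{ad}_x\mathrm{ad}_y$, is a morphism. It remains to apply $\mathrm{Tr}:\End(L)\to\Bbbk$, and this is exactly where traciality enters. Trace-invariance of $L$ itself is what we need, and $L$ is a quotient of $TL$ (in the module case it is also a subobject of $TL$). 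Co-hereditary traciality, meaning that all finite-dimensional quotients are tracial, therefore gives that $L$ is tracial.

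Equivariance of a bilinear form $\beta$ needs care in the orthogonal-complement argument. In the comodule case, $\beta(x_0,y_0)x_1y_1=\beta(x,y)1$ says $\beta$ is a comodule map. For a subcomodule $W$ and $x\in W^\perp$, I would expand $\rho(x)=\sum_i x_i\otimes h_i$ with the $h_i$ linearly independent, and show each $x_i\in W^\perp$. To do so, apply equivariance to $x\otimes w_0$ paired against $S(w_1)$, using the antipode to move the coaction off $w$. A cleaner route is to view $\beta$ as an equivariant map $L\to L^*$ in the rigid category; then $W^\perp$ is the kernel of the composite $L\to L^*\to W^*$. Since $W\subseteq L$ is a subobject, restriction $L^*\to W^*$ is a morphism of duals, so this kernel is a subobject. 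The module case is the same argument.

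The main obstacle is the duality bookkeeping. One must check that the identification $\End(L)\cong L\otimes L^*$ used in \Cref{def:trcl.coact}, with the left dual, makes both composition and $\mathrm{ad}$ equivariant. One must also check that the chosen side of duals matches the one for which $L\to L^*$ and $L^*\to W^*$ are morphisms. When $S^2\ne 1$, left and right duals differ, which is exactly why traciality is an assumption here rather than automatic. I would fix the conventions so that all of these maps use the same dual $L^*$.
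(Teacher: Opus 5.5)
Your proof is correct, but it recovers $\fr(L)$ from the Killing form differently from the paper.

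\textbf{What is shared.} Both arguments first use traciality of $L$ to make $\kappa$ an $H$-(co)module map $L\otimes L\to\Bbbk$. The paper's invariance of $\ker\kappa$ is then the case $W=L$ of your orthogonal-complement lemma.

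\textbf{The paper's route.} The paper only uses the sandwich $\fn(L)\subseteq\ker\kappa\subseteq\fr(L)$ together with ``nondegenerate Killing form implies semisimple''. So it must iterate. It passes to $L/\ker\kappa$, then to the quotient by \emph{its} Killing kernel, and so on until the quotient is semisimple. The radical $\fr(L)$ is the top of the resulting preimage chain. Each step needs the Killing form of the current quotient to be equivariant, and that is where co-hereditary traciality is used.

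\textbf{Your route.} You use the sharper characteristic-$0$ identity $\fr(L)=[L,L]^{\perp_\kappa}$, which holds for two reasons.
\begin{enumerate}[(1)]
\item The ideal $I:=[L,L]^{\perp_\kappa}$ has Killing form $\kappa|_{I\times I}$, and $\kappa(I,[I,I])\subseteq\kappa(I,[L,L])=0$. Cartan's criterion then makes $I$ solvable, so $I\subseteq\fr(L)$.
\item For $r\in\fr(L)$ and $x,y\in L$ we have $\kappa(r,[x,y])=\kappa([r,x],y)$. Here $[r,x]\in\fn(L)\subseteq\ker\kappa$, by the same inclusion $\fr(L)\le(\fn(L):L)$ used in \Cref{cor:if.nil.then.solv}. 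So $\fr(L)\subseteq I$.
\end{enumerate}
A single application of your lemma with $W=[L,L]$ then finishes the proof, with no induction and no passage to quotients.

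\textbf{What each buys.} Your argument only ever invokes traciality of $L$ itself. For finite-dimensional objects this is no real weakening: traciality is a condition on the annihilator (resp.\ coefficient coalgebra) of $V$, so it is inherited by quotients. The paper's route, in exchange, needs only the textbook proof of the Killing criterion rather than this finer description of the radical.

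\textbf{Minor point.} Your detour through $TL$ is unnecessary. In this theorem the co-hereditary traciality hypothesis is placed on $L$ itself, and $L$ is trivially one of its own quotients.
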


A number of remarks note attendant pathologies:
\begin{itemize}[wide]
\item quantifiably ``large'' sets of prime characteristics house comparatively small non-solvable Lie algebras with vanishing Killing form (\Cref{pr:sl2.prime.dens}), impeding the recovery of the solvable radical via familiar Killing-kernel constructs;

\item and, while nilradical $H$-(co)invariance automatically entails radical (co)invariance in characteristic 0 (with no tracial assumptions), the converse is not generally true (\Cref{res:trc:nec}\Cref{item:res:trc:nec:solv.inv.nil.ninv}). 
\end{itemize}

\subsection*{Acknowledgments}

I am grateful for insightful comments from A. Gordienko. 


\section{Radical (co)invariance under tracial (co)actions}\label{se:rad.coinv}

\pf{th:leaves.jac.inv}
\begin{th:leaves.jac.inv}
  The hypothesis ensures that $H$ leaves (co)-invariant the bilinear form
  \begin{equation*}
    A^{\otimes 2}
    \ni
    a\otimes b
    \xmapsto{\quad}
    \Braket{a\mid b}
    :=
    \mathrm{Tr}(ab\cdot)
    \in
    \Bbbk,
  \end{equation*}
  and the conclusion follows from the fact that under the characteristic-index assumptions $J(A)$ is precisely the \emph{kernel} of that bilinear map:
  \begin{equation*}
    J(A)=
    \left\{x\in A\ :\ \Braket{x\mid a}=0,\ \forall a\in A\right\}
  \end{equation*}
  (a left-right symmetric condition, $\Braket{-\mid -}$ being symmetric). 
\end{th:leaves.jac.inv}

As noted prior to \Cref{def:trcl.coact}, the involutory-$H$ result (\cite[Theorem 2.1]{MR1995055}, \cite[Theorem 3.2]{MR3514537}) now follows.

\begin{corollary}\label{cor:h.inv.assoc}
  Let $A$ be a finite-dimensional unital, associative $H$-(co)module $\Bbbk$-algebra for an involutory Hopf algebra $H$ with $\dim A<\mathrm{char}(\Bbbk)$ if the latter is positive.

  The (co)action is automatically tracial, and hence the Jacobson radical of $A$ is $H$-(co)invariant. 
\end{corollary}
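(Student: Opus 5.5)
The second claim follows from \Cref{th:leaves.jac.inv} once traciality is established, so the task reduces to showing that every finite-dimensional left $H$-module (respectively right $H$-comodule) $V$ is tracial when $S^2=\id$. I would check this directly on the evaluation/coevaluation data. Identify $\End(V)\cong V\otimes V^*$. The $H$-action on $\End(V)$ is $h\triangleright T=h_{(1)}\circ T\circ S(h_{(2)})$; this is the action induced on the module-algebra $\End(V)$, and it matches the tensor-product action on $V\otimes V^*$ with $V^*$ given its left dual structure via $S$.

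For the module case, I would compute
\begin{equation*}
  \mathrm{Tr}(h\triangleright T)
  =
  \mathrm{Tr}\bigl(h_{(1)}TS(h_{(2)})\bigr)
  =
  \mathrm{Tr}\bigl(TS(h_{(2)})h_{(1)}\bigr),
\end{equation*}
using cyclicity of the trace. Now apply $S$ to the antipode identity $S(h_{(1)})h_{(2)}=\varepsilon(h)1$. Since $S$ is an anti-homomorphism, this gives $S(h_{(2)})S^2(h_{(1)})=\varepsilon(h)1$, and with $S^2=\id$ it becomes $S(h_{(2)})h_{(1)}=\varepsilon(h)$. Hence $\mathrm{Tr}(h\triangleright T)=\varepsilon(h)\mathrm{Tr}(T)$, which is exactly invariance of the trace functional.

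The comodule case is dual. Write the coaction on $V$ as $v\mapsto v_{(0)}\otimes v_{(1)}$, so that the induced coaction on $\End(V)$ is $T\mapsto T_{(0)}\otimes T_{(1)}$. I would express traciality in matrix coefficients: fix a basis $e_i$ with $\rho(e_j)=\sum_i e_i\otimes c_{ij}$. The coaction on $V^*$ then involves $S(c_{ij})$, and coinvariance of $\mathrm{Tr}$ amounts to
\begin{equation*}
  \sum_i S(c_{ij})c_{ki}=\delta_{jk}1 .
\end{equation*}
This follows from applying $S$ to the antipode relation $\sum_i c_{ki}S(c_{ij})=\delta_{kj}$ and using $S^2=\id$ together with anti-multiplicativity. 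Equivalently, traciality is a statement about the right pairing $V\otimes V^*\to\Bbbk$ being a morphism, and $S^2=\id$ is precisely what makes the left dual $V^*$ also a right dual.

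The main obstacle is bookkeeping: the conventions for the induced (co)action on $\End(V)$ must match what \Cref{def:trcl.coact} intends, in particular which of $S$ or $S^{-1}$ appears in the dual structure. Once the conventions are fixed, both verifications are one line. With traciality of $A$ in hand, \Cref{th:leaves.jac.inv} applies verbatim under the assumption $\dim A<\mathrm{char}(\Bbbk)$, giving $H$-(co)invariance of $J(A)$.
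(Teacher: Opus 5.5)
Your proof is correct, but it takes a different route from the paper's.

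\textbf{The two routes.} The paper argues basis-free. Via Tannaka reconstruction (and an external lemma for coactions), $S^2=\id$ makes the canonical identification $V\cong V^{**}$ an $H$-(co)module map, so the left dual $V^*$ is also a right dual. The trace is then written as: coevaluation for the left dual, followed by $f\otimes\id$, then $V\cong V^{**}$, then evaluation. This is a composite of $H$-morphisms, and the module case is handled by ``virtually the same argument.'' You instead unpack this into explicit Hopf-algebra identities. Cyclicity of the trace reduces traciality to a ``reversed'' antipode identity: $S(h_{(2)})h_{(1)}=\varepsilon(h)$ for modules, and its analogue on matrix coefficients for comodules. You obtain that identity by applying $S$ to an antipode axiom.

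\textbf{What each buys.} Your route is elementary and self-contained. It treats modules directly, sidestepping the paper's caveat that Tannaka reconstruction only recovers $H^{\circ}$-comodules there. It also makes visible that only the reversed antipode identity on the relevant coefficients is used. The paper's route explains why involutivity is the natural hypothesis: traciality is exactly equivariance of the ``wrong-side'' evaluation $V\otimes V^*\to\Bbbk$, i.e.\ of $V\cong V^{**}$. It is also immune to the convention bookkeeping you flag. Your closing remark about left and right duals is precisely the paper's argument.

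\textbf{A small bookkeeping slip.} Applying $S$ to $\sum_i c_{ki}S(c_{ij})=\delta_{kj}$ gives $\sum_i c_{ij}S(c_{ki})=\delta_{kj}$, not your displayed $\sum_i S(c_{ij})c_{ki}=\delta_{jk}$. The displayed identity instead comes from applying $S$ to the other axiom, $\sum_i S(c_{ki})c_{ij}=\delta_{kj}$. Take the standard conventions: $e^j\mapsto\sum_k e^k\otimes S(c_{jk})$ on $V^*$, and $v\otimes w\mapsto v_{(0)}\otimes w_{(0)}\otimes v_{(1)}w_{(1)}$ on tensor products. Then traciality of $V\otimes V^*$ reads $\sum_i c_{ij}S(c_{ki})=\delta_{jk}$, which is what your stated derivation actually produces. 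Both reversed identities hold when $S^2=\id$, so the argument goes through unchanged.
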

\begin{proof}
  The second claim follows from the first by \Cref{th:leaves.jac.inv}, so it suffices to address the former. For coactions this is recorded as \cite[Lemma 3.1]{MR3514537}; for completeness, a monoidal-categorical basis-free justification runs as follows.

  \begin{itemize}[wide]
    \item $H$ being involutory is equivalent, via \emph{Tannaka reconstruction} \cite[Theorem 2.4.2]{schau_tann}, to the canonical identification $V\cong V^{**}$ being an $H$-comodule morphism for all $V\in \cM^H_f$ (category of finite-dimensional $H$-comodules).

      Equivalently, \emph{left and right duals} \cite[Definition 1.2.3]{schau_tann} coincide in $\cM^H_f$ via the standard vector-space double-dual identification.

    \item now, the trace of an endomorphism $f\in \End(V)$, $\dim_{\Bbbk}<\infty$ can be defined as
      \begin{equation*}
        \Bbbk
        \xrightarrow[\quad\text{regarding $V^*$ as the \emph{left} dual}\quad]{\quad\text{coevaluate}\quad}
        V\otimes V^*
        \xrightarrow{\quad f\otimes\id\quad}
        V\otimes V^*
        \cong
        V^{**}\otimes V^*
        \xrightarrow{\quad\text{evaluate}\quad}
        \Bbbk.
      \end{equation*}
    \item All of the above being $H$-comodule maps in the involutory case, all $V\in \cM^H_f$ are tracial. 
    \end{itemize}
    Virtually the same argument functions for $H$-modules: in that case Tannaka reconstruction functions less smoothly, and the category recovered from the forgetful functor $\tensor*[_{H}]{\cM}{_f}\xrightarrow{\cat{forget}}\cat{Vec}$ from finite-dimensional $H$-modules is that of finite-dimensional comodules over the \emph{finite dual} \cite[Definition 1.2.3]{mont} $H^{\circ}$; nevertheless, $H$ being involutory certainly entails all of the categorical machinery adduced above for coactions.
\end{proof}

\pf{th:leaves.lie.nil.rad.inv}
\begin{th:leaves.lie.nil.rad.inv}
  The adjoint representation $L\xrightarrow{\mathrm{ad}_{L}(\bullet):=[\bullet,-]}\End(L)$ is $H$-equivariant, and the nilradical is precisely \cite[\S II.3, Theorem 3]{jac_lie_1979} the preimage of the Jacobson radical of the associative algebra generated by $\mathrm{ad}_L(L)$. As that associative algebra is a quotient of the tensor algebra $TL$ ($H$-equivariantly), the conclusion follows from the assumed co-hereditary tracial character of the (co)action and \Cref{th:leaves.jac.inv}. 
\end{th:leaves.lie.nil.rad.inv}

\begin{remark}\label{re:series.inv}
  As an aside on (co)invariant canonical constructs in Lie algebras, note that the respective members $L^{(n)}$, $L^{[n]}$ and $L_{[n]}$ of the \emph{derived, descending (lower) and ascending (upper) central series} \cite[\S\S I.1.5, I.1.6]{bourb_lie_1-3} always are for Lie algebras $L\in \tensor*[_H]{\cM}{}$ or $L\in \cM^H$ with $H$-(co)invariant bracket.

  For the derived series $L^{(n+1)}=\left[L^{(n)},L^{(n)}\right]$ and the lower central series $L^{[n+1]}=\left[L,L^{[n]}\right]$ this is immediate. For the upper central series, once the center $\fz(L)=L_{[1]}$ is proven (co)invariant the conclusion follows by iteration: the hypotheses assumed of $L$ will hold with $L/\fz(L)$ in its place. To prove center (co)invariance, observe that it is precisely the kernel of the $H$-(co)module morphism $L\xrightarrow{ad_L}\underline{\End}(L)$, the latter object denoting the \emph{internal hom} \cite[\S 1.5]{kly} in the appropriate \emph{closed monoidal} category:
  \begin{itemize}[wide]
  \item in $\tensor*[_H]{\cM}{}$, nothing but the full endomorphism ring $\End(L)$;
  \item in $\cM^H$, the largest \emph{rational} $H^*$-submodule (what \cite[Theorem 2.1.3(d)]{swe} would denote by $\End(L)^{rat}$).
  \end{itemize}
\end{remark}

In characteristic 0 more is true: \emph{(solvable-)radical} \cite[\S I.7, post Proposition 4]{jac_lie_1979} invariance is automatic under the theorem's constraints.

\begin{corollary}\label{cor:both.inv.char0}
  Under the hypotheses of \Cref{th:leaves.lie.nil.rad.inv}, in characteristic 0 the solvable radical $\fr(L)\trianglelefteq L$ is also $H$-(co)invariant.
\end{corollary}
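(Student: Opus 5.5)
In characteristic $0$ the solvable radical is pinned down by the nilradical through the classical identity
\begin{equation*}
  [L,\fr(L)]\subseteq \fn(L),
\end{equation*}
which holds for any finite-dimensional characteristic-$0$ Lie algebra (\cite[\S I.7, post Proposition 6]{jac_lie_1979}, \cite[\S I.5.3]{bourb_lie_1-3}). More precisely, I will show that
\begin{equation*}
  \fr(L)=\left\{x\in L\ :\ [x,L]\subseteq \fn(L)\right\}=\text{preimage of }\fz\left(L/\fn(L)\right)\text{ in }L .
\end{equation*}
\Cref{th:leaves.lie.nil.rad.inv} supplies the $H$-(co)invariance of $\fn(L)$, so $L/\fn(L)$ is again an $H$-(co)module Lie algebra with (co)invariant bracket. \Cref{re:series.inv} then makes its center (co)invariant, since the center is the kernel of the (co)module morphism $\mathrm{ad}$ into the internal hom. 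Its preimage under the (co)module quotient map $L\to L/\fn(L)$ is therefore (co)invariant as well, which is the claim.

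It remains to prove the displayed equality. For $\subseteq$, the identity $[L,\fr(L)]\subseteq\fn(L)$ says that $\fr(L)$ maps into $\fz(L/\fn(L))$.

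For $\supseteq$, let $Z$ be the preimage of $\fz(L/\fn(L))$. It is an ideal of $L$, and since $[Z,Z]\subseteq[Z,L]\subseteq \fn(L)$ with $\fn(L)$ nilpotent, $Z$ is solvable. Hence $Z\subseteq\fr(L)$.

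The only genuinely nontrivial ingredient is $[L,\fr(L)]\subseteq\fn(L)$. It relies on Lie's theorem and Cartan's criterion, which is why the hypothesis is characteristic $0$. I will cite it rather than reprove it.

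A second route, kept as a fallback, avoids the center. Characterize $\fr(L)$ as the orthogonal complement of $[L,L]$ under the Killing form $\kappa(x,y)=\mathrm{Tr}(\mathrm{ad}\,x\,\mathrm{ad}\,y)$. The co-hereditary tracial hypothesis on $TL$, applied to its finite-dimensional quotient $\mathrm{alg}\Braket{\mathrm{ad}_L(L)}$ acting on $L$, should make $\kappa$ $H$-(co)invariant. The complement of the (co)invariant subspace $[L,L]$ is then (co)invariant; dualizing this orthogonal-complement argument is routine. Checking the tracial compatibility needed there is the main technical obstacle, which is why the center-based argument above is preferable: it uses only what \Cref{th:leaves.lie.nil.rad.inv} and \Cref{re:series.inv} already provide.
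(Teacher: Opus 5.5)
Your argument is correct and is essentially the paper's. The preimage of $\fz(L/\fn(L))$ is exactly the colon ideal $(\fn(L):L)$, and the paper likewise combines the characteristic-$0$ inclusion $[L,\fr(L)]\subseteq\fn(L)$ (\cite[\S II.7, Theorem 13]{jac_lie_1979}) with the automatic reverse inclusion. The only cosmetic difference is that the paper gets (co)invariance of $(\fn(L):L)$ from the pullback description in \Cref{le:mod.quot.inv}, via \Cref{cor:if.nil.then.solv}, whereas you use center invariance of the quotient from \Cref{re:series.inv}; both are the same internal-hom kernel argument.
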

\begin{proof}
  This follows from \Cref{th:leaves.lie.nil.rad.inv} and \Cref{cor:if.nil.then.solv} below, recording a more general invariance phenomenon.
\end{proof}

\begin{lemma}\label{le:mod.quot.inv}
  Let $L\in \cC\in \left\{\tensor*[_H]{\cM}{},\cM^H\right\}$ be a Lie algebra with invariant bracket for a Hopf algebra $H$. If $M,N\le P$ are $H$-equivariant $L$-module embeddings, then
  \begin{equation*}
    (M:N)
    :=
    \left\{x\in L\ :\ xN\le M\right\}
    \overset{\text{ideal}}{\trianglelefteq}
    L
  \end{equation*}
  is $H$-(co)invariant. 
\end{lemma}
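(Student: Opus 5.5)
The plan is to mirror the center argument of \Cref{re:series.inv}: exhibit $(M:N)$ as the kernel of an $H$-(co)module morphism out of $L$. Since kernels of morphisms in $\tensor*[_H]{\cM}{}$ or $\cM^H$ are subobjects, this gives (co)invariance at once.

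First, because $M\le P$ is an $H$-equivariant $L$-submodule, the quotient $P/M$ is again an $H$-(co)module with $H$-equivariant $L$-action; the action descends because $M$ is stable under $L$ and the action map $L\otimes P\to P$ is a morphism in $\cC$. Next, form the composite
\begin{equation*}
  L\otimes N\xrightarrow{\quad\text{act}\quad}P\xrightarrow{\quad\text{quotient}\quad}P/M,
\end{equation*}
which is a morphism in $\cC$, and curry it into a morphism
\begin{equation*}
  \phi:L\to \underline{\Hom}(N,P/M),
\end{equation*}
using the closed monoidal structure. In $\tensor*[_H]{\cM}{}$ the internal hom is the full $\Hom(N,P/M)$ with its usual conjugation action. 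In $\cM^H$ it is the rational part of $\Hom(N,P/M)$ as an $H^*$-module, as in \Cref{re:series.inv}. The elements $x\in L$ with $xN\le M$ are exactly those with $\phi(x)=0$, so $(M:N)=\ker\phi$.

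The one point needing care is the comodule case. There I have to confirm that the curried map, which as a linear map is simply $x\mapsto (n\mapsto xn+M)$, really factors through the rational submodule and that its kernel agrees with the naive kernel. That follows from the universal property of $\underline{\Hom}$ in $\cM^H$, since the underlying map is just the vector-space currying, so the kernel is the set-theoretic one. If I wanted to avoid internal homs entirely, I could argue directly with $K=\ker\phi$: in the module case, for $h\in H$, $x\in K$ and $n\in N$, write $(hx)n=\sum h_1\bigl(x\,(S(h_2)n)\bigr)$, which lies in $h_1M\subseteq M$. In the comodule case I would use the dual coaction identity instead.

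Finally, the statement that $(M:N)$ is an ideal is standard. For $y\in L$ and $x\in(M:N)$ we have $[y,x]n=y(xn)-x(yn)\in M$, because $N$ and $M$ are $L$-stable.
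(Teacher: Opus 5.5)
Your argument is correct and is essentially the paper's. The paper exhibits $(M:N)$ as the pullback of $L\to\underline{\Hom}(N,P)$ along $\underline{\Hom}(N,M)\hookrightarrow\underline{\Hom}(N,P)$ in $\cC$ and then uses exactness of the forgetful functor; this is the same subobject as your kernel of $L\to\underline{\Hom}(N,P/M)$, by left exactness of $\underline{\Hom}(N,-)$. Your elementwise check $(hx)n=\sum h_1\bigl(x\,(S(h_2)n)\bigr)$, together with its comodule analogue (cancelling $n_1$ against the antipode), is a worthwhile addition because it avoids relying on the description of internal homs in $\cM^H$ for infinite-dimensional $N$.
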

\begin{proof}
  Simply observe that $(M:N)$ can be recovered as the pullback
  \begin{equation*}
    \begin{tikzpicture}[>=stealth,auto,baseline=(current  bounding  box.center)]
      \path[anchor=base] 
      (0,0) node (l) {$(M:N)$}
      +(3,.5) node (u) {$L$}
      +(3,-.5) node (d) {$\underline{\Hom}(N,M)$}
      +(6,0) node (r) {$\underline{\Hom}(N,P)$,}
      ;
      \draw[right hook->] (l) to[bend left=6] node[pos=.5,auto] {$\scriptstyle \trianglelefteq$} (u);
      \draw[->] (u) to[bend left=6] node[pos=.5,auto] {$\scriptstyle $} (r.north west);
      \draw[->] (l) to[bend right=6] node[pos=.5,auto,swap] {$\scriptstyle $} (d);
      \draw[right hook->] (d) to[bend right=6] node[pos=.5,auto,swap] {$\scriptstyle $} (r.south west);
    \end{tikzpicture}
  \end{equation*}
  $\underline{Hom}(\bullet,\bullet)\in \cC$ again denoting the internal homs referenced in \Cref{re:series.inv}. The conclusion follows from the fact that the right-hand maps are, in this case, $H$-equivariant (and the forgetful functor $\cC\to \cat{Vec}_{\Bbbk}$ is exact and hence pullback-preserving). 
\end{proof}

\begin{corollary}\label{cor:if.nil.then.solv}
  Let $H$ be a characteristic-0 Hopf algebra and $L\in \cM^H_f$ or $L\in \tensor*[_H]{\cM}{_f}$ a Lie algebra with $H$-(co)invariant bracket.

  If the nilradical of $L$ is $H$-(co)invariant, so is the solvable radical. 
\end{corollary}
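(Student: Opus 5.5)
The plan is to express the solvable radical as an ideal quotient of the kind handled by \Cref{le:mod.quot.inv}, built from the nilradical and the adjoint module. Concretely, I will show that in characteristic 0
\begin{equation*}
  \fr(L)
  =
  \left\{x\in L\ :\ [x,L]\le \fn(L)\right\}
  =
  (\fn(L):L),
\end{equation*}
where $L$ is regarded as an $L$-module via the adjoint action. Once this identity is established, \Cref{le:mod.quot.inv} gives the conclusion directly. We take $P:=N:=L$, which carries the adjoint action, an $H$-equivariant $L$-module because the bracket is $H$-(co)invariant. We take $M:=\fn(L)$, which is an $L$-submodule because it is an ideal, and which is $H$-(co)invariant by hypothesis. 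Hence $M\le P$ and $N\le P$ are $H$-equivariant $L$-module embeddings, and the ideal quotient $(M:N)=\fr(L)$ is $H$-(co)invariant.

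The inclusion $\fr(L)\subseteq(\fn(L):L)$ is the step that needs characteristic 0. It is the classical fact that $[L,\fr(L)]\le\fn(L)$ for finite-dimensional Lie algebras over characteristic-0 fields (e.g. \cite[\S I.5.3]{bourb_lie_1-3}, or \cite{jac_lie_1979} via Lie's theorem after extending scalars to an algebraic closure). I will simply cite it. I expect this to be the only non-formal input, and it is exactly where the characteristic hypothesis enters.

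For the reverse inclusion, let $x\in L$ satisfy $[x,L]\le\fn(L)$ and set $I:=\Bbbk x+\fn(L)$. We have $[L,I]\le[L,x]+[L,\fn(L)]\le\fn(L)\le I$, so $I$ is an ideal of $L$. Moreover $I/\fn(L)$ has dimension at most $1$ and is therefore abelian, while $\fn(L)$ is nilpotent and hence solvable. It follows that $I$ is a solvable ideal, so $I\le\fr(L)$, and in particular $x\in\fr(L)$. This reverse inclusion holds in any characteristic, so the main obstacle is only the cited containment $[L,\fr(L)]\le\fn(L)$.
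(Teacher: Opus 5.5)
Your proposal is correct and matches the paper's proof: the paper also identifies $\fr(L)$ with $(\fn(L):L)$, using the characteristic-0 containment $[L,\fr(L)]\le\fn(L)$ (Jacobson, \S II.7, Theorem 13) for one inclusion and the characteristic-free reverse inclusion, and then applies \Cref{le:mod.quot.inv}. You additionally spell out what the paper leaves implicit, namely the choice $P=N=L$, $M=\fn(L)$ and the short argument showing that $\Bbbk x+\fn(L)$ is a solvable ideal.
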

\begin{proof}
  In characteristic 0 $\fr(L)\le (\fn(L):L)$ \cite[\S II.7, Theorem 13]{jac_lie_1979} on the one hand, while the opposite inclusion is automatic regardless of characteristic; the claim follows from \Cref{le:mod.quot.inv}. 
\end{proof}

\pf{th:char0.solv.rad.trcl}
\begin{th:char0.solv.rad.trcl}
  Observe first that co-hereditarily tracial $H$-(co)module Lie algebras have $H$-(co)invariant \emph{Killing form} \cite[\S 5.1]{hmph_1972}: the composition
  \begin{equation*}
    L^{\otimes 2}
    \xrightarrow{\quad\mathrm{ad}_L^{\otimes 2}\quad}
    \left(L\otimes L^*\right)^{\otimes 2}
    \xrightarrow{\quad\text{multiply}\quad}
    L\otimes L^*
    \xrightarrow{\quad\mathrm{Tr}\quad}
    \Bbbk.
  \end{equation*}
  The kernel
  \begin{equation*}
    \ker \kappa:=\left\{x\in L\ :\ \kappa(x,-)\equiv 0\right\}
    \trianglelefteq
    L
  \end{equation*}
  of $\kappa$ on the one hand plainly contains the nilradical $\fn(L)$, and is on the other hand contained in the radical $\fr(L)$ by \cite[\S 5.1, proof of Theorem]{hmph_1972} (which requires the characteristic 0 assumption). The induced (co)action on $L/\ker \kappa$ is again tracial by the assumed hereditarily tracial character of the original (co)action, so we can proceed by induction on $\dim L$ by passing to $L/\ker\kappa$ for as long as the Killing form $\kappa$ is degenerate.
  
  As soon as the induced $\kappa$ on one of the successive quotients of $L$ is non-degenerate, that quotient will be semisimple by \cite[\S 5.1, Theorem]{hmph_1972} (Killing non-degeneracy implies semisimplicity regardless of characteristic, as that result's proof notes). The $H$-(co)invariant union of the preimage chain of induced Killing kernels will be the radical $\fr(L)$.
\end{th:char0.solv.rad.trcl}

In parallel to \Cref{cor:h.inv.assoc}, there is the analogous consequence with essentially the same proof.

\begin{corollary}\label{cor:h.inv.lie}
  Let $L$ be a finite-dimensional $H$-(co)module Lie $\Bbbk$-algebra for an involutory Hopf algebra $H$ with $\dim \mathrm{alg}\Braket{\mathrm{ad}_L(L)}<\mathrm{char}(\Bbbk)$ if the latter is positive.

  \begin{enumerate}[(1),wide]
  \item The (co)action is automatically co-hereditarily tracial on $TL$. Consequently:

  \item The nilradical $\fn(L)$ is $H$-(co)invariant.

  \item In characteristic 0 the radical $\fr(L)$ is $H$-(co)invariant.  \qedhere
  \end{enumerate}  
\end{corollary}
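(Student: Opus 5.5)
The plan is to reduce everything to item (1). Items (2) and (3) then follow at once from \Cref{th:leaves.lie.nil.rad.inv} and \Cref{th:char0.solv.rad.trcl}, respectively. For (3) we also note that co-hereditary traciality of $TL$ yields that of $L$ itself: $L$ is the degree-one graded summand of $TL$, and the projection $TL\to L$ is $H$-(co)linear because the (co)action on $TL$ is the diagonal one, which respects degree. Quotients of $L$ are therefore quotients of $TL$.

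For (1), we run the argument of \Cref{cor:h.inv.assoc} verbatim, for every finite-dimensional (co)module. That argument never used the algebra structure of $A$ or the dimension bound. It only used that, for involutory $H$, the canonical identification $V\cong V^{**}$ is a morphism in $\cM^H_f$ (respectively $\tensor*[_H]{\cM}{_f}$). Consequently the composite coevaluation, $f\otimes\id$, double-dual identification and evaluation defining $\mathrm{Tr}$ is a composite of morphisms in the category, so every finite-dimensional object $V$ is tracial. Applied to any finite-dimensional quotient of $TL$ (or of any other (co)module), this gives that $TL$ is co-hereditarily tracial; in fact every object is.

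The only step needing care is the module case, where Tannaka reconstruction is replaced by the finite dual. We handle it directly instead. For $V\in\tensor*[_H]{\cM}{_f}$, the action on $V\otimes V^*\cong\End(V)$ is $h\cdot T=\sum h_{(1)}\,T\,S(h_{(2)})$, with $V^*$ the left dual. We need $\mathrm{Tr}(h\cdot T)=\varepsilon(h)\mathrm{Tr}(T)$. Cyclicity of the trace gives
\[
\mathrm{Tr}(h\cdot T)=\sum\mathrm{Tr}\bigl(S(h_{(2)})h_{(1)}T\bigr)
\]
(reading $h$ as its action operator on $V$), and $\sum S(h_{(2)})h_{(1)}=S\bigl(\sum S(h_{(1)})h_{(2)}\bigr)$ after applying $S^2=\id$, which equals $\varepsilon(h)$. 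The comodule case is dual: the coaction on $\End(V)$ is $T\mapsto\sum v_{(0)}\ldots$, and the same cyclicity together with $\sum S(x_{(2)})x_{(1)}=\varepsilon(x)$ (valid since $S^2=\id$) gives that $\mathrm{Tr}$ is colinear. This explicit check also removes any reliance on the Tannakian remark.

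The main obstacle, if any, is bookkeeping: we must confirm that the (co)action on $TL$ and its quotients $\mathrm{alg}\Braket{\mathrm{ad}_L(L)}$ is the one for which these identifications hold, and that the hypothesis $\dim\mathrm{alg}\Braket{\mathrm{ad}_L(L)}<\mathrm{char}(\Bbbk)$ is exactly what \Cref{th:leaves.lie.nil.rad.inv} requires. Both are immediate from the construction.
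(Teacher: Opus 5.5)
Your proposal is correct and is essentially the paper's own argument: the paper likewise reruns the proof of \Cref{cor:h.inv.assoc} to get that, for involutory $H$, every finite-dimensional (co)module is tracial. In particular every finite-dimensional quotient of $TL$, and $L$ itself, is tracial, and the paper then applies \Cref{th:leaves.lie.nil.rad.inv} and \Cref{th:char0.solv.rad.trcl}.

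Your explicit module-case check via $\sum S(h_{(2)})h_{(1)}=\varepsilon(h)$ is a nice concrete substitute for the Tannakian remark. Your comodule computation is left unfinished. With right comodules and $V^*$ coacted on via $S$, the identity that actually arises is $\sum x_{(2)}S(x_{(1)})=\varepsilon(x)$ on matrix coefficients, not $\sum S(x_{(2)})x_{(1)}=\varepsilon(x)$; both hold once $S^2=\id$, so nothing breaks.
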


\begin{remark}\label{re:lie.rad.inv.char0}
  Literature precursors to \Cref{cor:h.inv.lie} include (nil)radical coinvariance under involutory coactions in characteristic 0 \cite[Theorem 4.1]{MR3514537}, in turn generalizing automatic (nil)radical grading (\cite[Corollary 4.3]{MR3115171}, \cite[Proposition 3.3]{MR3116786}) in algebraically closed characteristic 0.
\end{remark}

Characteristic 0 is essential in concluding that the iterative procedure of extending Killing-form kernels (employed in the proof of \Cref{th:char0.solv.rad.trcl}) functions, even for non-solvable Lie algebras small relative to the positive characteristic. \Cref{pr:sl2.prime.dens} is one incarnation of those types of positive-characteristic pathologies; its statement references the usual \cite[\S VI.4.1]{ser_arith_1973} notion of \emph{density} for a set $\cP'\subseteq \cP:=\text{set of primes}$
\begin{equation*}
  \forall\left(\cP'\subseteq \cP:=\text{set of all primes}\right)
  \left(
    \text{\emph{density}}\quad d(\cP')
    \ 
    :\xlongequal{\text{if it exists}}
    \ 
    \lim_{s\searrow 1}\frac{\sum_{p\in \cP'}\frac 1{p^s}}{\log \frac 1{s-1}}
  \right).
\end{equation*}

\begin{proposition}\label{pr:sl2.prime.dens}
  The set of primes
  \begin{equation*}
    \cP':=
    \left\{
      p\in \cP\ :\
      \begin{gathered}
        \exists\left(\text{representation }\rho:\fsl_2\circlearrowright V\right)/\text{algebraically closed $\mathrm{char}(\Bbbk)=p$}\\
        \kappa_L\equiv 0
        ,\quad
        \dim L < p
        ,\quad
        L:=V\rtimes \fsl_2
      \end{gathered}
    \right\}
  \end{equation*}
  contains some
  \begin{equation*}
    \cP''\subseteq \cP'
    ,\quad
    d\left(\cP''\right)
    \ge
    \frac 1{\inf_{n\in 2\bZ_{\ge 1}+1}\varphi\left(\frac{n(2n-1)(2n+1)}3\right)}
    ,\quad
    \varphi:=\text{Euler totient}.
  \end{equation*}
\end{proposition}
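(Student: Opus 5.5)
The plan is to realize the required Lie algebras explicitly as $L:=V\rtimes\fsl_2$, where $V$ is a direct sum of $k$ copies of a single Weyl-type module $V_m$ of highest weight $m$ (dimension $m+1$). We first compute the Killing form $\kappa_L$ in terms of $m$ and $k$. We then choose $m$ as a function of a fixed odd $n\ge 3$, and $k$ as a function of $p$, so that $\kappa_L\equiv 0$ and $\dim L<p$. Finally we read off the density from Dirichlet's theorem on primes in arithmetic progressions.

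\emph{Step 1: Killing form of a semidirect product.} Take $x,y\in L$. If either argument lies in the abelian ideal $V$, say $v\in V$, then $\mathrm{ad}_v$ kills $V$ and sends $\fsl_2$ into $V$. Hence $\mathrm{ad}_v\mathrm{ad}_y$ and $\mathrm{ad}_y\mathrm{ad}_v$ both map $L$ into $V$ and vanish on $V$, so they have trace $0$. Thus $\kappa_L$ vanishes on $V\times L$. For $x,y\in\fsl_2$ we get
\begin{equation*}
  \kappa_L(x,y)=\kappa_{\fsl_2}(x,y)+k\,\mathrm{Tr}_{V_m}(\rho(x)\rho(y)).
\end{equation*}
Both summands are invariant forms on $\fsl_2$ and are multiples of the standard trace form $\mathrm{Tr}_{\Bbbk^2}(xy)$. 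We get the scalars by evaluating at $h$. Since $\mathrm{Tr}_{\Bbbk^2}(h^2)=2$, $\kappa_{\fsl_2}(h,h)=8$, and
\begin{equation*}
  \mathrm{Tr}_{V_m}(\rho(h)^2)=\sum_{j=0}^m(m-2j)^2=\frac{m(m+1)(m+2)}3,
\end{equation*}
we obtain
\begin{equation*}
  \kappa_L=\left(4+k\,\frac{m(m+1)(m+2)}6\right)\mathrm{Tr}_{\Bbbk^2}
\end{equation*}
on $\fsl_2$. These are integer identities, so they hold mod $p$ as long as $V_m$ is defined by the usual integral formulas, which we may take for $m<p$.

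\emph{Step 2: parameter choice.} Set $m=2n-1$ with $n\ge3$ odd. Then
\begin{equation*}
  \frac{m(m+1)(m+2)}6=\frac{n(2n-1)(2n+1)}3=:N,
\end{equation*}
which is an odd integer, hence coprime to $4$. Let $\cP''_n$ be the set of primes $p\equiv 4\pmod N$, and for such $p$ set $k:=(p-4)/N$. Then $4+kN=p\equiv 0$, so $\kappa_L\equiv 0$. The dimension is
\begin{equation*}
  \dim L=3+2nk=3+\frac{6(p-4)}{(2n-1)(2n+1)},
\end{equation*}
which is $<p$ because $(2n-1)(2n+1)\ge 35>6$ for $n\ge 3$. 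Also $m=2n-1<p$ for all but finitely many such $p$, which does not affect density. So $\cP''_n\subseteq\cP'$ up to a finite set.

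\emph{Step 3: density.} By Dirichlet's theorem in its analytic-density form \cite[\S VI.4]{ser_arith_1973}, $d(\cP''_n)=1/\varphi(N)$. Choosing the odd $n\ge3$ minimizing $\varphi(N)$, so that the infimum in the statement is attained, gives the bound.

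The main point to get right is Step 1: checking that the mixed terms and the $V\times V$ terms vanish, and pinning down the exact scalar, including the normalization $\kappa_{\fsl_2}=4\,\mathrm{Tr}$. Everything afterwards is the congruence choice $p\equiv 4\pmod N$, which is designed so that $4+kN$ equals $p$ on the nose while $k$ stays small relative to $p$.
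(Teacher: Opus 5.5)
Your proposal is correct and follows the paper's proof essentially step for step. You decompose $\kappa_L$ on $V_{2n-1}^{\oplus k}\rtimes\fsl_2$ as $\kappa_{\fsl_2}+\kappa_\rho=\bigl(4+kN\bigr)\mathrm{Tr}$ with $N=n(2n-1)(2n+1)/3$, realize $p=4+kN$ as a prime in the progression $p\equiv 4 \pmod N$ via Dirichlet, and check $3+2nk<p$; the only differences are that you verify the vanishing on $V$ and compute the scalar directly (where the paper invokes $\fr(L)=\fn(L)=V$ and an external trace-form computation), you make explicit the oddness of $N$ behind $\gcd(4,N)=1$, and your caveat about $m<p$ is harmless but unnecessary, since $p\ge N+4>2n-1$ automatically.
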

\begin{proof}
  Note first that for any \emph{semidirect product} (\cite[\S I.1.8, Example 2]{bourb_lie_1-3}) $L:=V\rtimes \fg$ attached to a representation $\rho:\fg\circlearrowright V$ with semisimple $\fg$ we have $\fr(L)=\fn(L)=V$, so the vanishing of the Killing form $\kappa_L$ amounts to its vanishing on ${\fg}^{\otimes 2}$. There, it restricts to
  \begin{equation}\label{eq:res.kill}
    \kappa_L|_{\fg^{\otimes 2}}
    =
    \kappa_{\fg}
    +\kappa_{\rho}
    ,\quad
    \kappa_{\rho}
    :=
    \mathrm{Tr}\left(\rho(-)\cdot \rho(-)\right)
    \quad
    \left(\text{\emph{trace form} \cite[p.543]{MR2578888}}\right).
  \end{equation}
  In the present context, the various $\rho:\fsl_2\circlearrowright V$ to be considered will be multiples $V_m^{\oplus k}$ of the usual \cite[\S 7.2, Theorem]{hmph_1972} Weyl modules $V_m$, $\dim V_m=m+1$, exhausting the simple finite-dimensional $\fsl_2$-representations in characteristic 0.
  
  It follows from \cite[Proposition 4.1]{MR2578888} and \Cref{eq:res.kill} that $\kappa_{V^{\oplus k}_{2n-1}\rtimes \fsl_2}\equiv 0$ in characteristic $p$ whenever 
  \begin{equation*}
    p\ |\ 4+k\left(1^2+3^2+\cdots+(2n-1)^2\right)
    =
    4+k\frac{n(2n-1)(2n+1)}3.
  \end{equation*}
  Per \emph{Dirichlet's theorem} \cite[\S VI.4.1, Theorem 2]{ser_arith_1973} on primes in arithmetic progressions, that last value achieves prime values ranging over a set $\cP''$ of density $\frac 1{\varphi\left(\frac{n(2n-1)(2n+1)}3\right)}$. The dimension constraint
\begin{equation*}
  \dim L=\dim \fsl_2+k\dim V_{2n-1}=3+2kn
  <
  4+k\frac{n(2n-1)(2n+1)}3
\end{equation*}
also obtains for the stipulated range of $n\ge 3$, hence the conclusion.
\end{proof}

\begin{remarks}\label{res:trc:nec}
  \begin{enumerate}[(1),wide]
  \item Field-characteristic considerations aside, the tracial character of the (co)actions involved is also a crucial factor in radical (co)invariance. \cite[Example 2.8]{MR3115171} can be expanded (and paraphrased) into an action by \cite[\S 7.3]{rad}'s 4-dimensional Hopf algebra $H_{2,-1}$ on $L:=\fg\rtimes \fg$ (semidirect product attached to the adjoint action) for any semisimple $\fg$, with
    \begin{itemize}[wide]
    \item an order-2 \emph{grouplike} \cite[Deﬁnition 2.1.10]{rad} $g\in H_{2,-1}$ implementing a $\bZ/2$-grading rendering the acting (non-ideal) $\fg$ even and the acted-upon $\fg$ odd;
    \item and a nilpotent $x\in J(H_{2,-1})$ acting as an odd operator, annihilating the even $\fg$ and mapping the odd $\fg$($=\fr(L)=\fn(L)$) onto the former via the obvious identification. 
    \end{itemize}
    The cited \cite[Example 2.8]{MR3115171} is what this construction specializes to (in characteristic 0, and presented in slightly different language) for $\fg:=\fsl_2$.

  \item That example generalizes further, with $\fg$ still assumed semisimple: for \emph{any} representation $\fg\circlearrowright V$ and $\fg$-equivariant map $V\xrightarrow{\psi}\fg$ (the codomain being $\fg$-acted upon adjointly), compatible in the sense that
    \begin{equation*}
      \forall\left(v,w\in V\right)
      \left(
        \psi(v)\triangleright w+\psi(w)\triangleright v=0
      \right)
    \end{equation*}
    (a condition reminiscent of the \emph{Peiffer identity} \cite[p.462]{MR2068521} familiar in the theory of 2-groups and crossed modules), one has an action $H_{2,-1}\circlearrowright L:=V\rtimes \fg$ with $g$ implementing a grading as before and $x$ operating as $\psi$ on $V=\fr(L)=\fn(L)\le L$ and trivially on $\fg\le L$. $\psi\ne 0$ violates (plain and nil)radical preservation.
    
  \item\label{item:res:trc:nec:solv.inv.nil.ninv} Effectively the same argument, incidentally, also shows that the converse to \Cref{cor:if.nil.then.solv} cannot hold (even in characteristic zero, barring involutivity): in the same construction, take $\fg$ solvable (so that $L:=V\rtimes \fg$ is as well) but not nilpotent. If the image of the $\fg$-equivariant map $V\xrightarrow{\psi}\fg$ is not contained in $\fn(\fg)$, the nilradical $\fn(L)\le V\oplus \fn(\fg)$ will not be $H_{2,-1}$-invariant despite $\fr(L)=L$ being so. The requisite constraints are achievable: $\fg$ might be solvable non-nilpotent, $\fg\circlearrowright V:=\fg$ the adjoint representation, and $\psi=\id_{\fg}$. 
  \end{enumerate}
\end{remarks}


\addcontentsline{toc}{section}{References}

\def\polhk#1{\setbox0=\hbox{#1}{\ooalign{\hidewidth
  \lower1.5ex\hbox{`}\hidewidth\crcr\unhbox0}}}


\Addresses

\end{document}